\theoremstyle{definition}
\newtheorem{rem}{Remark}
\newtheorem{notation}{Notation}
\newtheorem{claim}{Claim}
\newtheorem{thm}{Theorem}
\newtheorem*{thm*}{Theorem}
\newtheorem*{question*}{Question}
\newtheorem*{observation*}{Observation}
\newtheorem{lem}[thm]{Lemma}
\newtheorem{cor}[thm]{Corollary}
\newtheorem{fact}[thm]{Fact}
\newtheorem{dfn}[thm]{Definition}
\begin{document}
\title{The Additive  Structure of Integers with the Lower Wythoff Sequence}
\author[$\dagger$,1]{Mohsen Khani}
\author[$\dagger$,2]{Afshin Zarei}
\affil[$\dagger$]{Department of Mathematical Sciences, Isfahan University of Technology, Isfahan 84156-83111, Iran}
\affil[1]{mohsen.khani@iut.ac.ir}
\affil[2]{afshin.zarei@math.iut.ac.ir}
\date{\today}
\setcounter{Maxaffil}{0}
\renewcommand\Affilfont{\itshape\small}
\maketitle
\begin{abstract}
We have provided a model-theoretic proof for the decidability of the
additive structure of integers  together with the function $f$
mapping $x$ to $\lfloor \varphi x\rfloor$ where $\varphi$ is the golden ratio.
\end{abstract}
\section*{Introduction}
While the theory of the structure 
$(\mathbb{Z},+,\cdot)$ is famously undecidable, tame reducts of this structure have been subject of various literature,  see for example \cite{Co,H16,H15,KS}.
A classical result in this direction is the decidability of the
theory of the structure 
$(\mathbb{Z},+,\{p_n\}_{n\in \mathbb{N}},0,1)$,  known as the theory of $\mathbb{Z}$-groups.
In the mentioned structure, multiplication in $\mathbb{Z}$
is replaced by infinitely-many
unary predicates $p_n$, where
$p_n(x)$ holds if $x\stackrel{n}{\equiv} 0$.
More recent relevant results
are, for example, that there are no intermediate structures 
between the group of integers and Presburger arithmetic (Conant in \cite{Co}); and that
the  theory of integers with 
a predicate for 
prime numbers is decidable provided that Dickson's conjecture holds (Kaplan and Shelah in \cite{KS}). 
\par 
In this paper we prove
the decidability of the structure $(\mathbb{Z},+,f,0,1)$
where $f(x)=\lfloor \varphi x\rfloor$, and
$\varphi$ is the golden ratio.
We are already aware that this follows
from the  
decidability of the theory of $(\mathbb{R},\mathbb{Z},\alpha\mathbb{Z},+,<)$
for  a quadratic irrational number $\alpha$, as proved 
by Hieronymi in \cite{H16}.
His proof relies on the continued fractions and Ostrowski representations and 
 interpreting in the 
 structure $( \mathbb{N},\mathcal{P}(\mathbb{N}),\in ,s_{\mathbb{N}}) $. Since the latter  is decidable by a classical result of B\"uchi \cite{B62},
 so is the former. 
We have also been  later informed by
the referee that
the decidability of our  structure
can as well be obtained as
a consequence of two papers of Shallit et al., \cite{Shalitt1,Shalitt3} where
they even propose  an automata-based
decision algorithms for Fibonacci words.
\par 
Nevertheless, although we rely on facts on Fibonacci words
and the lower Wythoff sequence
to explain the properties of
the function $f$,
our approach is pure model-theoretic
and based on a quantifier-elimination result in a suitable language.
This approach was
suggested by Hieronymi, who guessed that a model-theoretic treatment of the properties of Beatty sequences would lead to the decidability of our structure.
\par 
Recall that any sequence of the form
$\mathcal{B}_r=(\lfloor rn\rfloor )_{n\in \mathbb{N}}$ 
for a positive irrational $r$, 
is called a \textit{Beatty sequence}. For $r>1$ and
$s={r}/({r-1})$,
 $(\mathcal{B}_r,\mathcal{B}_s)$  form a so-called
pair of complementary Beatty sequences;  that is
$\mathcal{B}_r\cup \mathcal{B}_s=\mathbb{N}$ and $\mathcal{B}_r\cap \mathcal{B}_s=\emptyset$
(see \cite{connell, connellii} for more on Beatty
sequences).
The Beatty sequence 
$\mathcal{B}_r$,
in the special case that $r=\varphi$,
is the golden ratio, is called the \textit{lower
Wythoff sequence}. 
\par 
We augment the language  of
$\mathbb{Z}$-groups
by
the unary function symbol $f$  and denote the obtained language by $\mathcal{L}$.
This choice of the language suggests that in order to have a chance for quantifier-elimination, 
we need to deal with  
systems of 
equations involving congruence relations and the function $f$.
It turns out that the solvability of such systems is closely related to
a classical theorem of
Kronecker 
(see \Cref{keronecker})
that
the set  of   decimal parts of elements of the form $\varphi n$, for $n\in \mathbb{N}$, is dense
in the unit interval $(0,1)$.  We will deploy this connection as a major means for our axiomatization.
\par 
The main idea we rely on is that the
order  of the decimal parts is definable in $\mathcal{L}$.
That is there is an $ \mathcal{L} $-formula $R(x,y)$ such that $R(m,n)$ holds for 
two integers $m,n$ if and only if the decimal part of
$ \varphi m $ is smaller than that  of $ \varphi n $.
 Hence we add a binary predicate 
$R(x,y)$ to $\mathcal{L}$ to obtain the language $\mathcal{L}^*$ (see \Cref{notation}), and our main theorem is the following.
\begin{thm*}
	The structure $(\mathbb{Z},+,f,R,\{p_n\}_{n\in \mathbb{N}},0,1)$ admits elimination of quantifiers.
\end{thm*}
\par 
The paper is structured as follows.
Basic facts
about the properties of $f$ 
are gathered in Section 1.
Note that as $f(-n)=-f(n)-1$ for each natural number $n$, 
in the lemmas on the properties of $f$ in sections 1,2, (until before \Cref{LemRxy}) we have restricted the domain to natural numbers.
In section 2, some auxiliary lemmas
are proved
to be used in Section 3 as the basis of our axiomatization.
The quantifier-elimination result
and the decidability that follows 
immediately from it are established in Section 4.
\section{Preliminaries on the Function $f$} 
By properties of the floor function, 
it is clear that 
for natural numbers  $m$ and $n$, we have
either $f(m+n)=f(m)+f(n)$, or 
 $f(m)+f(n)+1$. Hence, for each natural number $k$, there is 
 $0\leq \ell\leq k-1$ such that
 $f(kn)=kf(n)+\ell$. Of course $\ell$ is 
 the unique number such that $f(kn)\stackrel{k}{\equiv}\ell$.
\begin{lem}
\label{forf+x}
For every $m\in  \mathbb{N}$ there is $n\in \mathbb{N}$ such that either 
$m=f(n)$ or $m=f(n)+n$.
\end{lem}
\begin{proof}
As $\frac{\varphi}{\varphi-1}=\varphi+1$, $(\mathcal{B}_{\varphi},
\mathcal{B}_{\varphi+1})$ is a complementary pair of Beatty sequences,
which clearly means that each natural number $m$ is either equal to $f(n)$ or $f(n)+n$ for some natural number $n$.
\end{proof}
Note that \Cref{forf+x} holds also when one replaces 
$\mathbb{N}$ with $\mathbb{Z}$ and forces $m\neq -1$; simply because $f(-n)=-f(n)-1$ for all positive $n$.
\par 
Depending on whether $m$ belongs to the image of  $f$
or $f+\mathrm{id}$, where $\mathrm{id}$ denotes the identity function,  and
by the properties of  Beatty sequences one obtains a 
recursive definition for the function $f$ in natural numbers as in the following lemma.
\begin{lem}
\label{definitionoff}
$f(0)=0$, $f(1)=1$, and for each natural number $n>1$,
\mbox{$f(f(n)+n)=2f(n)+n$} and $f(f(n))=f(n)+n-1$.	 
\end{lem}
\begin{proof}
The fact that 
$f(f(n)+n))=2f(n)+n$ follows from 
\cite[Theorem 1]{connell}.
Now
$f(f(n)+n)$ equals either to $f(f(n))+f(n)$ or $f(f(n))+f(n)+1$.
The former cannot occur since
the images of $f$ and $f+\mathrm{id}$
are disjoint.
So $f(f(n)+n)=2f(n)+n=f(f(n))+f(n)+1$,
and the result follows.
\end{proof}
The lemma above implies    that for every $n \in \mathbb{N}$, $f(n)=\min \ \mathbb{N}\setminus \{f(i),f(i)+i: i<n\}$, hence in particular  $f$ is strictly increasing.
Again \Cref{definitionoff} also holds in $\mathbb{Z}$ but one needs to add that  $f(-1)=-2$.
\par 
At this point, we aim to establish 
the connection between 
the function $f$ and the Fibonacci sequence  $F_n$
with $F_0=F_1=1$.
This connection will play a major role in our proofs in this section of the properties of $f$
in natural numbers.
\par 
Consider the sequence
$(c_n)_{n\in \mathbb{N}}$, where $c_n=1$ if $n$ 
is in the image of $f$ and 0 otherwise.
By the properties of the floor function
and the fact that $\varphi>1$ it is easy to check that there are no successive zeros in $(c_n)_{n\in \mathbb{N}}$. 
A curious way to obtain $(c_n)_{n\in \mathbb{N}}$ is to
 start with the word 10 (of course of length 2) and then replace
1 with 10 and 0 with 1 (to obtain a word of length 3), and apply the same change to the word obtained (to obtain a word of length 5), and 
continue the same way.
So the length of each such word is a Fibonacci number and the last 
digit  alternates between 0 and 1.
So
 $c_{F_{2n+1}}=1$ and 
 $c_{F_{2n}}=0$ for each $n\in \mathbb{N}$, and
 $c_{F_{n}+i}=c_i$ for each $1\leq i\leq F_{n-1}$.  
 \par 
Meanwhile, note that each natural number $n$ has a unique \textit{Fibonacci representation};
that is, it can be uniquely written as a sum of non-successive decreasing Fibonacci numbers.
 To see this
one needs to find the largest Fibonacci 
number $F_{i_1}<n$, and write $n=F_{i_1}+G_1$.
Now let $F_{i_2}$ be the largest Fibonacci number less than $G_1$ and
write $n=F_{i_1}+F_{i_2}+G_2$ and continue with this procedure to end up with a 
Fibonacci number (see also \cite{Z}).
The explanation above on the sequence $(c_n)_{n\in \mathbb{N}}$
together with this  Fibonacci representation yields the following fact.
\begin{fact}
\label{last-index}
For each $n$, 
 the smallest index 
 appearing in  the unique {Fibonacci representation}
of $n$
determines $c_n$, where
$c_n=1$ if and only if
this index is odd.
\end{fact}
This in turn provides us with a concrete rule for $f$ as follows.
{
\begin{fact}\label{fact1}
\hfill
\begin{enumerate}
\item 
$f(F_i)=
F_{i+1}$ if  $i$ is even, and
$f(F_i)=F_{i+1}-1$, otherwise.
\item 
If $m$ has the Fibonacci representation  $m=F_{i_1}+F_{i_2}+\ldots +F_{i_\ell}$ with $i_1>i_2>\ldots >i_\ell$, then
$f(m)=
F_{i_1+1}+F_{i_2+1}+\ldots +F_{i_\ell+1}$ 
if $i_\ell$ \text{ is even}, and
$
f(m)=
F_{i_1+1}+F_{i_2+1}+\ldots +F_{i_\ell+1}-1$,
otherwise.
\end{enumerate}
\end{fact}
\begin{proof}
Note that $f(m)$ is equal to the index of the $m$th occurrence of $1$ in the sequence $(c_n)_{n\in \mathbb{N}}$. 
Now, by construction, the number of $1$'s  
in the sequence $(c_n)_{n\leq F_{i+1}}$
is $F_i$.
So, $f(F_i)=F_{i+1}$ if $i$ is even and 
$f(F_i)=F_{i+1}-1$ if $i$ is odd. Similarly the number of $1$'s
in $(c_n)_{n\leq N}$ with $N=F_{i_1+1}+F_{i_2+1}+\ldots +F_{i_{\ell}+1}$, is 
$F_{i_1}+F_{i_2}+\ldots +F_{i_{\ell}}$, and this implies the second item.
\end{proof}
It is worth reminding that our sequence $(c_n)_{n\in \mathbb{N}}$
is indeed the complement of the so-called \textit{Fibonacci word}, given by
$2+\lfloor{\varphi n}\rfloor-\lfloor
\varphi (n+1)\rfloor$.
\par 
We end this section by a simple, and
yet key fact (for a proof see  \cite{Wall}).
\begin{fact}
\label{fa2}
For any positive integer $k$ the sequence 
$(F_i\mod k )_{i\in \mathbb{N} }$ is periodic beginning with $0,1$.
\end{fact}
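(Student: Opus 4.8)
The plan is to prove the classical fact that the residue sequence is \emph{purely} periodic (the period already contains the first terms), which is slightly stronger than mere eventual periodicity and is exactly what the phrase ``beginning with $0,1$'' records. Throughout I use the convention $F_0=0$, $F_1=1$, consistent with the displayed period $0,1,\dots$; any index shift needed to reconcile this with the Fibonacci conventions used earlier in the paper is harmless, as it does not affect periodicity, and I may assume $n\ge 2$, the case $n=1$ being trivial.

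First I would invoke pigeonhole on \emph{pairs}. A single residue does not determine its neighbours, but a consecutive pair does: the state $(F_i\bmod n,\ F_{i+1}\bmod n)$ determines the entire forward sequence via $F_{i+2}=F_{i+1}+F_i$, and the entire backward sequence via $F_{i-1}=F_{i+1}-F_i$. There are only $n^2$ possible states, so among the states for $i=0,1,\dots,n^2$ two must coincide, say $(F_i,F_{i+1})\equiv(F_j,F_{j+1})\pmod n$ for some $0\le i<j\le n^2$.

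Next I would push this coincidence back to the start. Since the recurrence runs backwards over $\mathbb{Z}/n\mathbb{Z}$, from $(F_i,F_{i+1})\equiv(F_j,F_{j+1})\pmod n$ we obtain $(F_{i-1},F_i)\equiv(F_{j-1},F_j)\pmod n$, and iterating this $i$ times gives $(F_0,F_1)\equiv(F_\pi,F_{\pi+1})\pmod n$ with $\pi:=j-i\ge 1$. Running the recurrence \emph{forwards} from this identity, an immediate induction on $k$ yields $F_{k+\pi}\equiv F_k\pmod n$ for all $k\ge 0$. Hence the residue sequence is periodic with period $\pi$, and since it starts at $F_0\bmod n=0$, $F_1\bmod n=1$, a full period reads $0,1,\dots$, as claimed.

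There is no real obstacle here; the only point needing a moment's care — and the only reason the statement is not literally the pigeonhole principle — is the upgrade from eventual to pure periodicity, which is supplied precisely by the invertibility of the map $(a,b)\mapsto(b,a+b)$ on $(\mathbb{Z}/n\mathbb{Z})^2$. In fact, even the weaker eventual periodicity from the first step would already give the decidability-relevant consequence that $F_i\bmod n$ is computable uniformly in $i$ and $n$, since the pre-period and period lengths are bounded by $n^2$ and found by finite search.
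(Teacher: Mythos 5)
Your proof is correct and is, in substance, the same argument the paper intends: the paper merely remarks that the recurrence $F_i=F_{i-1}+F_{i-2}$ descends to $\mathbb{Z}/n\mathbb{Z}$ and declares that this ``implies the above fact,'' leaving implicit exactly the two points you spell out — pigeonhole on consecutive pairs of residues and the invertibility of the shift map $(a,b)\mapsto(b,a+b)$, which upgrades eventual to pure periodicity so that the block $0,1$ recurs. The only cosmetic difference is your convention $F_0=0$, $F_1=1$ versus the paper's $F_0=F_1=1$, which, as you note, is an immaterial index shift.
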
}
\section{Auxiliary Lemmas for the Axiomatization}
The main lemmas in this section will
correspond to
the axioms we 
present
for our structure in the next section.
The first lemma below says that
the image of $f$ contains
elements in any congruence
class.
\begin{lem}\label{lem1}
For each $k,n\in \mathbb{N}$
there is $m\in  \mathbb{N}$ such that 
$f(m)\stackrel{k}\equiv n$.
\end{lem}
\begin{proof}
By \Cref{last-index},
if the smallest index in the Fibonacci representation of $n$ is odd,
 then $n$
 is in the image of $f$ and the equation is solved instantly; hence assume for the rest of the proof that this index is even.
 \par 
 Since  by \Cref{fa2} the Fibonacci sequence modulo $k$ is periodic,
  and the first two elements of this period are
 $0$ and $1$, there is a
  Fibonacci number $F_i$,
with $i$ larger than the largest index in 
the Fibonacci representation of $n$,  
    such that
 $F_i\stackrel{k}{\equiv} 1$. That is there is  $u\in \mathbb{N}$ such that 
  $F_i-1=ku$.
Because the index of the smallest Fibonacci number in the  representation of $n$ is even, i.e. $c_n=0$,
 we have
$c_{F_i-1+n}=c_{n-1}=1$. Hence $F_i-1+n$ is in the image of $f$ and $F_i-1+n \stackrel{k}{\equiv} n$. 
\end{proof}
The following lemma   generalizes the above 
in the way that
$m$ is also in a desirable congruence class.
\begin{lem}\label{lem5}
The  system 
\begin{align*}
	\begin{cases}
		x\stackrel{k}\equiv n\\
		f(x)\stackrel{k'}\equiv n'
	\end{cases}
\end{align*}
has a solution in
$\mathbb{N}$, 
for any $k,n,k',n'\in \mathbb{N}$. 
\end{lem}
\begin{proof}
It is obvious that $n$ is a solution of the equation $x\stackrel{k}{\equiv} n$. 
Let $F_{i_1}+F_{i_2}+\ldots +F_{i_\ell}$
be the Fibonacci representation of $n$ with $i_1>i_2>\ldots >i_\ell$. 
Suppose that
$n''$ is such that
 $f(n)\stackrel{k'}{\equiv}n''$. If $n''\neq n'$, by \Cref{fa2} we can find 
a Fibonacci number $F_j$ 
such that 
$F_j\stackrel{k}{\equiv}0$, $F_j\stackrel{k'}{\equiv}0$, $f(F_j)=F_{j+1}\stackrel{k'}{\equiv}1$, $j>i_1+1$, and 
$j$ is an even number. 
Now
since $F_j\stackrel{k}{\equiv} 0$, we have $n+F_j\stackrel{k}{\equiv}n$.
Also because $j>i_1 +1$ is even, 
by \Cref{fact1},
$f(n+F_j)=f(n)+f(F_j)\stackrel{k'}{\equiv}n''+1$.
This procedure gives, after finitely many steps,
a natural number $m$ such that 
$m\stackrel{k}{\equiv} n$ and $f(m)\stackrel{k'}{\equiv}n'$.
\end{proof}
Now, as it turns out, the lemma above has a close connection to 
the following fact about the 
distribution in the interval $(0,1)$ of 
the
decimal parts of 
$\varphi n$,  for all $n\in \mathbb{N}$ (as explained in 
\Cref{kron}).
Before mentioning the fact, let us fix some notation for the decimal parts.
\begin{notation}
We denote the decimal part of $\varphi n$ 
	by $[\varphi n]$; so $[\varphi n]=\varphi n-f(n)$.
\end{notation}

\begin{fact}(Kronecker \cite[Theorem 439]{hw})
\label{keronecker}
If 
$r$ is irrational, then $([r n])_{n\in \mathbb{N}}$ is dense in $(0,1)$.  
\end{fact}
To explain the connection  in question we need yet another lemma.
 \begin{lem}\label{lemker1}
 	For each $k$ and $0\leq i<k$,
 	$f(n)\stackrel{k}{\equiv}i$ if and only if 
 	$[\frac{\varphi}{k}n]\in (\frac{i}{k},\frac{i+1}{k})$.
 \end{lem}
 \begin{proof}
Write
 	$ \frac{\varphi}{k} n=u+r$, for some integer $ u $  and  $ 0<r<1 $. So 
 	$ f(n)\stackrel{k}{\equiv}i $ if and only if $ \varphi n=ku+kr $,
 	 with $ i<kr<i+1 $, that is $ \frac{i}{k}<r< \frac{i+1}{k}$. Therefore $ f(n)\stackrel{k}{\equiv}i $ if and only if $ \frac{\varphi}{k}n=u+r $ and $ \frac{i}{k}<r< \frac{i+1}{k} $. 
 \end{proof}
\begin{rem}\label{kron}
  By the lemma above, $[\varphi n]\in (\frac{i}{k},\frac{i+1}{k})$ if and only if $f(kn)\stackrel{k}{\equiv}i$.  Hence to find a natural number
  $n$ such that $[\varphi n]$ is
  in a desired small subinterval $(\frac{i}{k},\frac{i+1}{k})$ of $(0,1)$ one needs 
  to solve the following system of congruence-relation 
  equations: 
  \begin{align*}
  	\begin{cases}
   x\stackrel{k}\equiv 0 \\
f(x)\stackrel{k} \equiv i,		
  	\end{cases}
  \end{align*}
  and
  if $m$ is the solution of the system above then
  $n=\frac{m}{k}$ has the desired property.
This means that \Cref{lem5}  indeed implies Kronecker's theorem that $ \{[\varphi n]\}_{n\in \mathbb{N}} $ is dense in $ (0,1) $. 
It is an easy verification that Kronecker's theorem  also implies \Cref{lem5}.
\end{rem}
The remark above is interesting, because it suggests that
although
in our language
$\mathcal{L}$ 
it is not possible to have any symbol to refer to the decimal part of $\varphi x$,
 we are capable of finding an equivalent way of expressing in which interval $[\varphi x]$ lies.
Indeed many expressions about the function $f$ has an equivalent in terms of the decimal parts. For example,
$f(n+m)=f(n)+f(m)$ means that $[\varphi n]+[\varphi m]<1$, and similarly
$f(n+m)=f(n)+f(m)+1$ means that
$[\varphi n]+[\varphi m]>1$.  
As we see in the following lemma, much more can be said about the decimal parts already in the language $\mathcal{L}$.
\begin{lem}
	\label{LemRxy}
There is an $\mathcal{L}$-formula $R(x,y)$
such that for all $m,n\in \mathbb{Z}$,
$(\mathbb{Z},+,f,0,1)\models R(m,n)$
if and only if $[\varphi m]<[\varphi n]$.
\end{lem}
\begin{proof}
Simply let $R(x,y)$ be the following formula
\begin{align}
	\label{Rxy}
\forall z\Big(f(x+z)=f(x)+f(z)+1\to 
f(y+z)=f(y)+f(z)+1\Big).
\end{align}
We first show that if $[\varphi m]<[\varphi n]$, then $(\mathbb{Z},+,f,0,1)\models R(m,n)$. 
Note that when $[\varphi m]<[\varphi n]$
then for each $r\in \mathbb{Z}$, 
$[\varphi m]+[\varphi r]<[\varphi n]+[\varphi r]$.
Hence if $[\varphi m]+[\varphi r]>1$ then
$[\varphi n]+[\varphi r]>1$. But this is exactly what the formula $R(m,n)$ says.
\par 
For the other direction, note that if $[\varphi m]>[\varphi n]$, then $1- [\varphi m]<1-[\varphi n]$. Hence by Kronecker's theorem (\Cref{keronecker}), there is a
natural number $r$ such that $1- [\varphi m]<[\varphi r]<1-[\varphi n]$. But this means that $f(m+r)=f(m)+f(r)+1$ and $f(n+r)=f(n)+f(r) $, which means that the negation of $R(m,n)$ holds in $(\mathbb{Z},+,f,0,1)$.
\end{proof}
Thus the order of the decimal parts is definable in 
$(\mathbb{Z},+,f,0,1)$. 
We expand our language by the binary predicate $R(x,y)$ 
which is interpreted by formula \eqref{Rxy} above, and defines this order.
\begin{notation}\label{notation}
Let
	 $\mathcal{L}^*$ be the language $\mathcal{L}\cup \{R\}$.
 \end{notation} 
\begin{rem}\label{decorder}
	We interpret $R$ in $\mathbb{Z}$
as in \Cref{LemRxy}. 
We will later add to our axioms that $R$ is indeed a ``linear order relation''.
\end{rem}
Enriching the language with the predicate $R$ helps find equivalent expressions  in the language $\mathcal{L}^*$ 
to the following phrases (where $m_i,n_i\in \mathbb{N} $ and $\ell \in \mathbb{Z}$):
\begin{align}
& [\varphi x]+[\varphi y]<[\varphi z]\label{sum-of-decimals}\\
& m_1[\varphi x_1]+\ldots +m_k[\varphi x_k]<n_1[\varphi y_1]+\ldots +n_k[\varphi y_k]+\ell. \label{sum-decimals-tuple}
\end{align}
Phrase \ref{sum-of-decimals} can be expressed simply as follows:
\begin{align}
\Big( f(x+y)=f(x)+f(y)\Big) \wedge R(x+y,z)
\end{align}
Finding an equivalent to phrase \ref{sum-decimals-tuple} is also as easy; one needs to consider
cases $f(m_1x_1+\ldots+m_kx_k)=m_1f(x_1)+\ldots+m_kf(x_k)+\ell_1$, and
$f(n_1y_1+\ldots+n_ky_k)=n_1f(y_1)+\ldots+n_kf(y_k)+\ell_2$ for suitable $\ell_1,\ell_2$
and use the relation $R(m_1x_1+\ldots+m_kx_k,n_1y_1+\ldots+n_ky_k)$ accordingly.
\par 
Some more power of expression is provided using the following lemma.
\begin{lem}
\label{decimal-of-phi-f-x}
The decimal part of $\varphi f(n)$, for an integer $n$, is determined by the
 decimal part of $\varphi n$ as in 
the following:
\begin{align}\label{decf}
[\varphi f(n)]=
(1-\varphi)[\varphi n]+1
\end{align}
\end{lem}
\begin{proof} 
Note that $\varphi f(n)=\varphi (\varphi n-[\varphi n])=\varphi ^2n-\varphi[\varphi n]=\varphi n+n-\varphi[\varphi n]$, where the latter is the case because
$\varphi^2=\varphi+1$.
Hence, as $n$ is an integer, $[\varphi f(n)]=[\varphi n-\varphi [\varphi n]]$. 
\par 
If $[\varphi n]<\frac{1}{\varphi}$, then
obviously $[\varphi [\varphi n]]=\varphi [\varphi n]$.
Also it is clear that $[\varphi n]<\varphi [\varphi n]$, hence 
$[\varphi f(n)]=[\varphi n]-\varphi[\varphi n]+1$.
If $[\varphi n]>\frac{1}{\varphi }$, then $[\varphi [\varphi n]]=\varphi [\varphi n]-1<[\varphi n]$,
where the latter inequality is always the case.
So again
$[\varphi f(n)]=[\varphi n]-\varphi[\varphi n]+1$.
\end{proof}
Thus also the following phrase has an equivalent in the language $\mathcal{L}^*$ (where $m_i,n_i\in \mathbb{N}$ and $\ell,s\in \mathbb{Z}$):
\begin{align}
& m_1\varphi [\varphi x_1]+m_2[\varphi x_2]+\ldots+m_k[\varphi x_k]<\nonumber\\&n_1\varphi [\varphi y_1]+n_2[\varphi y_2]+\ldots+n_k[\varphi y_k]+\ell +\varphi s. \label{sum-decimals-tuple-with-phi} 
\end{align}
Note that the difference between the above phrase and
phrase \ref{sum-decimals-tuple} is that $\varphi$ itself appears as coefficient in two places, and a constant $\varphi s$
is added to the end.  To express
the above phrase one needs to simply replace $\varphi[\varphi x]$ with $[\varphi x]-[\varphi f(x)]+1$,
$\varphi[\varphi y]$ with $[\varphi y]-[\varphi f(y)]+1$ and $\varphi  s$ with $f(s)+[\varphi s]$,
to obtain 
 a similar phrase to \ref{sum-decimals-tuple}.

In  \Cref{T5} we will prove  that  the solvability
in $\mathbb{Z}$
 of a system of equations 
involving symbols of $\mathcal{L}^*$
 is expressible by a quantifier-free $\mathcal{L}^*$-formula. To explain the required argument more easily, we first deal with a simpler yet essential case in the following lemma.
\begin{lem}\label{rfsx}
 There is a quantifier-free formula $\Phi(y_1,y_2)$ in the language
$\mathcal{L}^*$
such that  for all integers $n_1,n_2$, $(\mathbb{Z},+,f,R,0,1)\models \Phi(n_1,n_2)$
if and only if
 the following system of equations 
has a solution in $\mathbb{Z}$,
\begin{align}\label{main-eq}
	\begin{cases}
 f(r_1x+s_1f(x)+n_1)=r_1f(x)+s_1f^2(x)+f(n_1)+j_1 \\
 f(r_2x+s_2f(x)+n_2)=r_2f(x)+s_2f^2(x)+f(n_2)+j_2
			\end{cases}
\end{align}
where  $r_1,s_1,j_1,r_2,s_2,j_2$ are fixed natural numbers.
\end{lem}
Note that the formula required in lemma above depends  on  
$r_1,s_1,j_1,r_2,s_2,j_2$, but for simplicity we have not reflected this dependence in the notation.
\begin{proof}
    The equations in \eqref{main-eq} can be rewritten  in terms of the decimal  parts as follows:
\begin{align*}
	\begin{cases}
j_1-[\varphi n_1]<r_1[\varphi x]+s_1[\varphi f(x)]<j_1+1-[\varphi n_1]\\
j_2-[\varphi n_2]<r_2[\varphi x]+s_2[\varphi f(x)]<j_2+1-[\varphi n_2].
\end{cases}
\end{align*}
Replacing $[\varphi f(x)]$ with 
$(1-\varphi)[\varphi x]+1$ as in
\Cref{decimal-of-phi-f-x},
we need to consider  the following system:
\begin{align}
\label{eqphix}
(r_1+s_1-s_1\varphi)[\varphi x]\in (j_1-[\varphi n_1] -s_1,j_1+1-[\varphi n_1] -s_1), \nonumber \\
(r_2+s_2-s_2\varphi)[\varphi x]\in  (j_2-[\varphi n_2] -s_2,j_2+1-[\varphi n_2] -s_2).
\end{align}
Thus the system is essentially of the form
\begin{align*}
& A_1[\varphi x]\in (B_1,C_1),\\
& A_2[\varphi x]\in (B_2,C_2)
\end{align*}
and  is solvable if either $\frac{B_1}{A_1}\in (\frac{B_2}{A_2},\frac{C_2}{A_2})$ or 
$\frac{C_1}{A_1}\in (\frac{B_2}{A_2},\frac{C_2}{A_2})$.
But then, each inequality needed to hold  (for example that $\frac{B_1}{A_1}> \frac{B_2}{A_2}$,
or equivalently $B_1A_2>B_2A_1$)
turns into an equality of the form \ref{sum-decimals-tuple-with-phi} and hence is expressible in the language 
$\mathcal{L}^*$.
\end{proof}
Now the same strategy (that is writing the equations in terms of the decimal parts and analyzing the obtained
 linear equations  in terms of disjoint or intersecting intervals) leads to the following corollary.
  Note that here 
other types of equations, for example of  the form $R(x,n)$ and $R(f(x),n)$, and  two congruence relation equations of the form
 $x\stackrel{r}{\equiv}j$ and $f(x)\stackrel{r'}{\equiv}j'$
are also added, but this does not essentially change the way we need to treat the system. Indeed all 
equations describe, in essence,  an inequality of the form
$ [\varphi x]\in (a,b) $ for suitable $a,b$.
\begin{cor}\label{T5}
There is a quantifier free  $\mathcal{L}^*$-formula 
$\theta(y_1,y_2,\ldots,y_{k})$
such that for all
$n_1,\ldots,n_{k}\in \mathbb{Z}$
we have $(\mathbb{Z},+,f,R,0,1)\models 
\theta(n_1,\ldots,n_{k})$ if and only if
the following system of equations 
 has a solution in $\mathbb{Z}$:
\begin{align}\label{eqT5}
\begin{cases}
f(r_1x+s_1f(x)+n_1)=r_1f(x)+s_1f^2(x)+f(n_1)+j_1\\
\vdots\\
f(r_{k-4}x+s_{k-4}f(x)+n_{k-4})=r_{k-4}f(x)+s_{k-4}f^2(x)+f(n_{k-4})+j_{k-4}\\
R(x,n_{k-3})\\
R(n_{k-2},x)\\
R(f(x),n_{k-1})\\
R(n_{k},f(x))\\
f(x)\stackrel{r_{k-3}}{\equiv}j_{k-3}\\
x\stackrel{r_{k-2}}{\equiv}j_{k-2}\\
\end{cases}
\end{align}
where $r_i,s_i, j_i\in \mathbb{N}$.
\end{cor}
\begin{proof}
Note that the last equation may be discarded, as one can replace $x$ with $r_{k-2}x'+j_{k-2}$ in the rest of 
the equations and change the system accordingly. 
The same is true for the last-but-one equation, as it  has an equivalent of the form $[\varphi x]\in (a,b)$ for some
rational $a,b$
and this information is
already in the rest of the equations.
So system \ref{eqT5} can be written in the following form:
\begin{align}
\begin{cases}
(r_1+s_1-s_1\varphi)[\varphi x]\in (j_1-[\varphi n_1] -s_1,j_1+1-[\varphi n_1] -s_1)\\
\vdots\\
(r_{k-4}+s_{k-4}-s_{k-4}\varphi)[\varphi x]\in (j_{k-4}-[\varphi n_{k-4}] -s_{k-4},j_{k-4}+1-[\varphi n_{k-4}] -s_{k-4})\\
[\varphi x]\in ([\varphi n_{k-2}],[\varphi n_{k-3}])\\
(1-\varphi) [\varphi x]\in([\varphi n_k],[\varphi n_{k-1}]).\\
\end{cases}
\end{align}
To solve this system one needs to check whether or not the corresponding intervals 
for $[\varphi x]$
have intersection. Now as in 
\Cref{rfsx} this can be described by an $\mathcal{L}^*$-formula.
\end{proof}
\section{Axiomatization}
We can now present an axiomatization 
$\mathbf{T}$
for our structure,
in the language $\mathcal{L}^*$
as in  \Cref{notation}.
The axioms are based on what we
developed in the previous two sections.
More specifically, the axiom-scheme 
$ (T1) $ below expresses the  basic properties of $(\mathbb{Z},+,\{p_n\}_{n\in \mathbb{N}},0,1)$
as a $\mathbb{Z}$-group.
 $(T2)$ and $(T3)$ express
 the main properties of the function $f$ based on \Cref{forf+x} and \Cref{definitionoff}.
$(T4)$
asserts that $R(x,y)$ is a  linear order relation, which, based on \Cref{decorder}, can be naturally thought of as the ``order of the decimal parts''.
 $ (T5) $ expresses that this order is dense.
 In other words,   $ (T5) $ expresses
  the Kronecker's theorem on the distribution
 of the decimal parts, based on \Cref{kron};  that is it says that if $R(a,b)$ holds (which can be thought of as $[\varphi a]<[\varphi b]$) then there is $c$ such that
 $R(a,c)$ and $R(c,b)$ hold (which again can be thought of as 
 $[\varphi a]<[\varphi c]<[\varphi b] $). Finally in the light of
\Cref{T5} the axiom-scheme $ (T6) $ expresses when a given system of equations has a solution.
\begin{dfn}\label{th2}
Let $\mathbf{T}$ be the theory 
obtained by the axioms expressing the following.
\begin{description}
\item[$(T1)$] The theory of $\mathbb{Z}$-groups,
 \item[$(T2)$]  $
\forall x \Big(x\neq -1\to \exists y \big( (x=f(y))\vee 
 (x=f(y)+y)\big)\Big)\wedge \forall x,y(f(x+y)=f(x)+f(y)\vee f(x+y)=f(x)+f(y)+1) $,
\item[$(T3)$]
$(f(0)=0)\ \wedge \ (f(1)=1)\ 
\wedge \ (f(-1)=-2)
\ \wedge\   \forall x \Big( f(f(x))=f(x)+x-1\  \wedge\  f(f(x)+x)=2f(x)+x\Big)$,
\item[$(T4)$] 
\begin{itemize}
\item $\forall x \ \neg R(x,x)$,
\item $\forall x,y \ \big(R(x,y) \to \neg R(y,x)\big)$,
\item $\forall x,y,z\ \big( R(x,z)\wedge R(z,y)\to R(x,y)\big)$,
\item $\forall x,y \ \big(R(x,y)\vee R(y,x)\big)$,
\end{itemize}
\item[$(T5)$]
$\forall x,y \Big(
R(x,y)\to \exists z\ (R(x,z)\wedge R(z,y))
\Big)$,
\item[$(T6)$] $\forall y_1,\cdots,y_{k}
\Bigg(\exists x \Big( \bigwedge\limits_{i=1}^{k-4} \big(f(r_ix+s_if(x)+y_i)=r_if(x)+s_if^2(x)+f(y_i)+j_i\big) \wedge R(x,y_{k-3})
\wedge R(y_{k-2},x)
\wedge 
R(f(x),y_{k-1})\wedge R(y_k,f(x))\wedge p_{r_{k-3}}(f(x)-j_{k-3})\wedge p_{r_{k-2}}(x-j_{k-2}) \Big)\leftrightarrow\theta(y_1,\ldots ,y_{k})\Bigg)$.
\end{description}
Note that $ (T1)$ and $(T6)$ are actually   axiom schemes.
\end{dfn}
\begin{thm}\label{ob1}
 $(\mathbb{Z},+,f,R,\{p_n\}_{n\in \mathbb{N}},0,1)$ is  a model of $\mathbf{T}$.
\end{thm}
The proof of the theorem above is clear by
the way we have established the axioms (and the explanation before \Cref{th2}). 
In the next section we have proved that
$\mathbf{T}$ eliminates quantifiers 
and this leads to the fact that 
$\mathbf{T}$ is complete and decidable.
\section{Quantifier-Elimination and Decidability}
For the rest of the paper,
let $\mathcal{M}_1$ and 
$\mathcal{M}_2$ be models of $\mathbf{T}$ and $\mathcal{M}$ be  a 
common substructure. We assume that
$\mathcal{M}_2$
is $|M|$-saturated.
To prove quantifier-elimination, we will show that
any finite system of equations in the language
$\mathcal{L}^*$ with parameters in $ M $ which has a solution in $M_1$ 
is also solvable in $M_2$.
\begin{lem}\label{cl1}
There is
$\mathcal{M}'\models (T1), (T2)$ such that
 $\mathcal{M}\subseteq \mathcal{M}'$ and $\mathcal{M}'\subseteq \mathcal{M}_i$ for $i=1,2$.
\end{lem}
\begin{proof}
Put $\mathcal{M}'=\{\frac{x}{n}|x\in M,\mathcal{M}_1,\mathcal{M}_2\models  p_n(x)\}$.
Indeed $\mathcal{M}'$ is the algebraically-prime model
of $(T1)$
in the language $\mathcal{L}$
containing 
$\mathcal{M}$.
\par 
We claim that
$\mathcal{M}'$ is closed under the function $f$, and hence 
bears an $\mathcal{L}$-structure.
 Suppose that $t=\frac{a}{n}\in M'$. Then $a=nt$ and $a\in M$. 
By properties  of $f$, we have  $\mathcal{M}_1\models f(a)=nf(t)+\ell$, where $\ell$ is the 
remainder of the division of $f(a)$ by $n$. 
 So $\mathcal{M}_1\models p_n(f(a)-\ell)$, and
$f	(a)-\ell \in M$. Therefore $f(t)=\frac{f(a)-\ell}{n}\in M'$. 
\par 
To prove Axiom $(T2)$,
let $a\neq -1$ be an arbitrary element in $M'$.
Since $\mathcal{M}_1$ is a model of $\mathbf{T}$,
there is $b\in M_1$ such that  $\mathcal{M}_1\models a=f(b)\vee a=f(b)+b$. 
We will show that  $b\in M'$.
\par 
If $\mathcal{M}_1\models a=f(b)$, then
by $(T3)$,
$\mathcal{M}_1\models f(a) =a+b-1$ 
and hence
$\mathcal{M}_1\models b =f(a)-a+1$.
It is clear that 
 $b\in M'$. 
\par 
If $\mathcal{M}_1\models a=f(b)+b$ 
then by
$(T3)$,
 $\mathcal{M}_1\models f(a)=f(f(b)+b)=2f(b)+b=a+f(b)$. Therefore $\mathcal{M}_1\models f(b)=f(a)-a$.
On the other hand, $a\in M'$ and $f(a)\in M'$ so $f(b)=f(a)-a\in M'$. 
Now since $f(b) \in M'$,  by 
the above argument
 we have $b\in M'$. 
 \par The second part of axiom $ (T2) $ is clearly inherited from $\mathcal{M}_1,\mathcal{M}_2$.
\end{proof}
\begin{thm}\label{th1}
The theory $\mathbf{T}$ admits elimination of quantifiers.
\end{thm}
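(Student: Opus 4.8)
The plan is to prove quantifier elimination via the standard back-and-forth criterion: it suffices to show that whenever $\mathcal{M}_1, \mathcal{M}_2 \models \mathbf{T}$ are sufficiently saturated with a common substructure $\mathcal{M}$, and $a \in M_1$, then one can embed the substructure generated by $\mathcal{M}$ and $a$ into $\mathcal{M}_2$ over $\mathcal{M}$. First I would invoke \Cref{cl1} to pass to an extension $\mathcal{M}'$ that is closed under $f$, under the partial inverse $f^{-1}$ (on the Beatty set), and under division by every $n$ whenever divisibility holds — so that $\mathcal{M}'$ realizes all of $\mathbf{T}$ except possibly axiom iv). By a symmetric argument applied to $\mathcal{M}_2$ we may assume the common substructure $\mathcal{M}$ already has these closure properties, so the only data to transfer is a single new element $a$ together with its $\mathcal{L}$-type over $\mathcal{M}$.

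The heart of the argument is then to analyze what the quantifier-free $\mathcal{L}$-type of $a$ over $\mathcal{M}$ consists of. Because $f$ on the generated structure is controlled by the fifth property of $f$ (the case split according to whether $a$, $f(a)$, etc.\ lie in the Beatty sequence or its complement, via $f(f(x))=f(x)+x-1$ and $f(f(x)+x)=2f(x)+x$), iterating $f$ and $f^{-1}$ starting from $a$ produces, after finitely many steps, either an element of $\mathcal{M}$ or an element in a ``generic'' position. So the type is governed by: (1) the Presburger type ${\rm tp}_{\rm pr}(a)$ over $\mathcal{M}$, i.e.\ the cut $a$ fills in $M$ together with all congruence conditions $a \stackrel{n}{\equiv} m$; (2) the analogous Presburger data for $f(a)$; and (3) the comparisons between $f(a)$ and the $\mathbb{Q}$-linear expressions $\frac{m}{n}a + c$ for $c \in M$. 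The key point is that axioms v)$_{m,n}^i$ pin down exactly these comparisons: given the cut of $a$ and a rational $\frac{m}{n}$ trapped between consecutive convergents $d_j, d_{j+1}$ (or $u_{j+1}, u_j$), \Cref{obs} / axiom v) converts ``$f(a) \lessgtr \frac{m}{n}a + c$'' into an inequality purely about $a$ and $c$, hence into Presburger data about $a$ over $\mathcal{M}$. Thus the whole $\mathcal{L}$-type of $a$ reduces to a Presburger-type condition on the pair $(a, f(a))$ together with the requirement that $f(a)$ actually be the correct value of $f$ at $a$ — and that last requirement is precisely what axiom iv) and the predicates $p_{n,n'}^{m,m'}$ were introduced to express in a quantifier-free way.

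Having reduced matters to Presburger data plus the $p_{n,n'}^{m,m'}$-predicates, I would finish as follows: the quantifier-free $\mathcal{L}$-type of $a$ over $\mathcal{M}$ determines (and is determined by) a consistent system asserting that $a$ lies in a prescribed cut of $M$ with prescribed congruences, that $f(a)=b$ for some $b$ lying in a prescribed cut with prescribed congruences, and that the ``compatibility'' predicate $p_{n,n'}^{m,m'}(a',a'')$ holds for the relevant parameters $a',a'' \in M$ bounding $a$ — which by axiom iv) is exactly the assertion that a suitable $x$ with $x \stackrel{n}{\equiv} m$, $f(x)\stackrel{n'}{\equiv} m'$, $a' < x < a''$ exists. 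Since $\mathcal{M}_2 \models \mathbf{T}$ satisfies the very same axioms and is saturated, and since the common substructure $\mathcal{M}$ sees the same truth values of all these predicates, the same system is realized in $\mathcal{M}_2$; one picks such a realization $a_2 \in M_2$ and checks, using properties 1--5 of $f$ and axioms v), that $a \mapsto a_2$ extends to an $\mathcal{L}$-embedding of the generated substructure. I expect the main obstacle to be item (3): verifying rigorously that the finitely many comparisons between $f(a)$ and all expressions $\frac{m}{n}a+c$ are genuinely captured by the axioms v)$_{m,n}^i$ — i.e.\ that for every relevant $\frac{m}{n}$ one has indeed trapped it strictly between two consecutive convergents so that some $d_i$ or $u_i$ applies, and that the resulting inequalities on $a$ are mutually consistent and transferable — together with the bookkeeping showing that iterating $f$ and $f^{-1}$ from $a$ terminates in $\mathcal{M}$ or in a controlled position, so that no further independent $\mathcal{L}$-data about $a$ escapes this analysis.
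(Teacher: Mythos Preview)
Your proposal is correct and follows essentially the same route as the paper: back-and-forth between saturated models via \Cref{cl1}, reduction of the $\mathcal{L}$-type of a new element to Presburger data on the pair $(a,f(a))$, use of axioms v)$_{m,n}^i$ to convert the comparisons $f(a)\lessgtr\frac{m}{n}a+c$ into inequalities on $a$ alone, and the predicates $p_{n,n'}^{m,m'}$ plus saturation to realize the resulting type in $\mathcal{M}_2$. The only point where the paper is more direct than your outline is your closing worry about iterating $f$ and $f^{-1}$ to termination: the paper bypasses any iteration by noting (via property~1 of $f$) that $f(rx+b)-rf(x)-f(b)$ is a bounded integer $\ell$, and since equal cuts of $x$ and $y$ over $M$ force equal cuts of $f(rx+b)-rf(x)$ and $f(ry+b)-rf(y)$ over $M$ with $f(b)+\ell\in M$, one gets $F(f(rx+b))=f(ry+b)$ immediately.
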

\begin{proof}
	According to \Cref{cl1},
we  add to the assumptions at the beginning of this section that
 $\mathcal{M}\models (T1), (T2), (T3), (T4)$. Note that
 $(T3)$ and $(T4)$ come for free because they are universal and hence  inherited from $\mathcal{M}_1$ and $\mathcal{M}_2$. Now
assume that  an element $a\in M_1$ satisfies finitely-many equations, each  of which 
of one of the
forms in \Cref{main-system} below, with parameters $c$, $d$, $e$, $e'$, $g$, and $g'$ in $M$ and coefficients
$m,n,m',n',r,s,t,u,j$
 in $\mathbb{N}$. Note that the negation of each of the following equations (except for the last one) has the same format as itself.
\begin{align}\label{main-system}
\begin{cases}
x\stackrel{n}\equiv m\\
f(x)\stackrel{n'}\equiv m' \\
f(rx+sf(x)+c)=rf(x)+sf^2(x)+f(c)+j\\
R(x,e)\\
R(e',x)\\
R(f(x),g)\\
R(g',f(x))\\
tf(x)=ux+d
\end{cases}
\end{align}
Also notice that we do not get more involved equations (say in terms of the powers of $f$) simply because
the powers of $f$   reduce to one  by Axiom $(T3)$.
Now we aim to find $b\in M_2$ satisfying the equations above.
\par
We first claim that if the system above actually contains an equation
of the last form ($ tf(x)=ux+d $), then $a$ is already in $M$, hence
 $b\in M_2$
can be taken to be $a$ itself.
 Indeed such an equation has an ``algebraic nature'' where the rest of the equations, which only concern with the decimal parts can be thought of being ``non-algebraic'' (see \cite{kvz}).
\begin{claim}\label{fx-mnx}
If for $d\in M$, $\mathcal{M}_1\models f(a)=\frac{u}{t}a+d$  then $a\in M$.
\end{claim}
\begin{proof}
Suppose that $\mathcal{M}_1\models f(a)=\frac{u}{t}a+d$, so $\mathcal{M}_1\models f(f(a))=f(\frac{u}{t}a+d)$.
By axiom $(T3)$,
$\mathcal{M}_1\models f(f(a))=f(a)+a-1$, hence
similar to the proof of the previous lemma,  
 \[
\mathcal{M}_1\models f(a)+a-1=\frac{uf(a)+\ell}{t}+f(d)+j\]
for some integer $\ell$ and natural number $j$. Replacing 
$f(a)$ in both sides of the above formula with $\frac{u}{t}a+d $ we get a linear equation in terms of $a$.
This forces that $a\in M$,
 because the linear equation gives
$a$ by the divisibility relation and
 $\mathcal{M}$ is a model of $(T1)$.
\end{proof}
By the above claim, if there is an equation of the form $tf(x)=ux+b$ in  system (\ref{main-system}), then the solution of this system is already in $M$. 
Hence, in the rest we drop the last  equation from the system.
\par 
Meanwhile by the Chinese remainder theorem (which itself is deduced from the theory of $\mathbb{Z}$-groups and hence holds in our theory $\mathbf{T}$)
one reduces the congruence equation relations
for $f(x)$ and $x$ 
 to a single one. Similarly by the properties of a linear order,  one can assume that there is only one 
 equation of each form $R(x,e)$, $R(e',x)$, $R(f(x),g)$ and $R(g',f(x))$ in the system.
Now 
if follows from axiom-scheme $(T6)$ that 
the system has a solution in $M_2$. This is because the quantifier-free formula in the mentioned axiom is satisfied in $\mathcal{M}$, and this is  because the system clearly possesses a solution, that is $a$, in $ \mathcal{M}_1$.
\end{proof}
\begin{cor}
The theory $\mathbf{T}$  is complete and hence equivalent to ${\rm Th}(\mathbb{Z},+,f,R,\{p_n\}_{n\in \mathbb{N}},0,1)$.
\end{cor}
\begin{proof}
Quantifier-elimination implies that $\mathbf{T}$ is model-complete. Also  
$(\mathbb{Z},+,f,R,\{p_n\}_{n\in \mathbb{N}},0,1)$
is a prime model of $\mathbf{T}$, and the claim follows.
\end{proof}
\begin{cor}
The theory $\mathbf{T}$ is decidable.
\end{cor}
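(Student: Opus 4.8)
The plan is to obtain decidability as a routine consequence of two facts established (or noted) above: that $\mathbf{T}$ is complete, and that $\mathbf{T}$ is recursively enumerable (equivalently, recursively axiomatizable). I would invoke the classical principle that a complete, recursively enumerable theory is decidable: to decide whether $\mathbf{T}\vdash\sigma$ for a given sentence $\sigma$, enumerate all finite derivations from a recursive axiomatization of $\mathbf{T}$; by completeness precisely one of $\sigma$, $\neg\sigma$ eventually appears as a conclusion, and this yields an effective decision. So the only substance in the proof is to make sure that the axiomatization in \Cref{th2} is genuinely recursive.

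For that I would go through the axiom schemas one at a time. Schemas i)--iii) are the standard recursive list of Presburger axioms together with finitely many sentences about $f$, so nothing is required there. For iv), for each fixed quadruple $(n,n',m,m')$ the axiom is the single $\mathcal{L}$-sentence $\forall a\,\forall b\,\big(p_{n,n'}^{m,m'}(a,b)\leftrightarrow\exists x\,(x\stackrel{n}{\equiv}m\wedge f(x)\stackrel{n'}{\equiv}m'\wedge a<x<b)\big)$, and the map sending the quadruple to this sentence is plainly computable; the point worth emphasising is that the predicates $p_{n,n'}^{m,m'}$ are pinned down by explicit first-order biconditionals, so that no appeal to an external, potentially undecidable semantic oracle is hidden in the axiomatization. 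For v), the sentences are indexed by $(m,n,i)$ (with the variable $y$ bound inside), together with the data of which of the two cases applies and of the threshold index $j$; since the convergents $d_j=F_{2j+1}/F_{2j}$ and $u_j=F_{2j+2}/F_{2j+1}$ are computable rationals, increasing respectively decreasing to $\varphi$, and $m/n\neq\varphi$, both the case and the index $j$ are computable from $(m,n)$. Hence the full axiom set is recursive, in particular recursively enumerable, as already observed.

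Combining these: $\mathbf{T}$ is complete by the preceding theorem and recursively axiomatizable by the above discussion, hence decidable; and since $\mathbf{T}={\rm Th}(\mathbb{Z},+,-,<,f,\{p_n\},\{p_{n,n'}^{m,m'}\},0,1)$, that theory is decidable as well. I do not anticipate any genuine obstacle; the only point requiring care is the recursiveness of schemas iv) and v) just discussed. I would also remark that \Cref{th1} affords a second, more constructive route: tracking the back-and-forth argument effectively produces an algorithm reducing an arbitrary sentence to a quantifier-free one, which can then be evaluated in the prime model using the explicit descriptions of $f$ and of the Fibonacci residues in \Cref{fact1} and \Cref{fa2}; but the completeness-plus-enumeration argument is shorter and already suffices for the stated corollary.
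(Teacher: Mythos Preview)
Your proposal is correct and follows essentially the same approach as the paper: the paper's proof is the one-line observation that a complete, recursively enumerable theory is decidable, relying on the earlier remark that $\mathbf{T}$ is recursively enumerable and on the preceding completeness theorem. You have simply expanded on why the axiomatization is recursive and noted an alternative effective-QE route, but the core argument is identical.
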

\begin{proof}
This follows from the fact that $\mathbf{T}$ is complete and
recursively enumerable. Note that to write $(T6)$ recursively, one only needs an algorithm to list all 
systems of equations and express when they are solvable. Expressing when a system is solvable 
only involves mentioning (using the power of the language $\mathcal{L}
^*$ as in \Cref{rfsx})
possible ways certain intervals have intersection.
 It is important to note that the algorithm is not needed to ``solve'' the system, but only express when it is solvable using finitely many conditions.
\end{proof}
\section*{Remarks}
\begin{enumerate}
\item In our earlier versions we claimed that
the structure $(\mathbb{Z},+,<,f,\{p_n\}_{n\in \mathbb{N}},0,1)$ eliminates quantifiers, but with the help of the referee's comments we found out that the proof was flawed.
 Adjusting our proof in the presence of the order of
 integers
 is not as straightforward as we first thought, and we leave this as the following question (whose answer we believe is positive).
 \begin{question*}
 Does the structure $(\mathbb{Z},+,<,f,R,\{p_n\}_{n\in \mathbb{N}},0,1)$ eliminate quantifiers?  
 \end{question*}
\item  
We do not know if $R(x,y)$ 
is equivalent to a quantifier-free formula in $\mathcal{L}$, 
but adding the binary function 
subtraction to the language, 
our quantifier-elimination result can be enhanced as below: 
\begin{observation*}[by A. Valizadeh]
The relation $R(x,y)$ is definable by the formula
$f(y-x)=f(y)-f(x)$. So (the proofs in this paper lead to the fact that) the structure
$(\mathbb{Z},+,-,f,\{p_n\}_{n\in \mathbb{N}},0,1)$
admits elimination of quantifiers.
\end{observation*} 
\item The proofs provided in the last version of this paper, are inspired by \cite{kvz}, where a much more difficult situation is dealt with in a similar manner. We decided to adopt the same technology here, as it made the proofs neater compared to our original proof.
\item The formula $R(x,y)$ suggests that the structure
$(\mathbb{Z},+,f,0,1)$ has the so-called ``order property'', which determines its place in terms of the model-theoretic classification of  theories. It is reasonable to ask whether this structure is NIP too.
\item We think that $\varphi$ can be replaced by any algebraic number, and the proofs will be essentially similar. But this needs to be checked.  As mentioned, in \cite{kvz} a much more general case is treated.

\end{enumerate}

\section*{Acknowledgements}
We would like to thank Philip Hieronymi, for bringing this question (and some others) to our attention, while
we were focused on a different one.
The question was initially suggested by him for the second author to work on 
in an academic  visit to Illinois, which finally turned out impossible.
\par 
We would also like to sincerely thank the anonymous referee whose comments
helped us improve this manuscript substantially.  His/Her questions and comments
lead us to this last version which we regard as much more mature than the earlier ones.
\par 
 We would like to thank
Ali Valizadeh whose contribution in \cite{kvz} lead to enhancing proofs
in this version.

\end{document}